\newtheorem{theorem}{Theorem} 
\newtheorem{alphtheorem}{Theorem}
\newtheorem{alphconjecture}{Conjecture}
\newtheorem{corollary}{Corollary}
\newtheorem{lemma}{Lemma}
\theoremstyle{definition}
\theoremstyle{remark}
\def\zero{\boldsymbol{0}}
\def\ds{\displaystyle}
\def\KG{\operatorname{KG}}
\def\SG{\operatorname{SG}}
\def\cd{\operatorname{cd}}
\def\alt{\operatorname{alt}}
\def\salt{\operatorname{salt}}
\title{A Generalization of Gale's lemma}
\author{Meysam Alishahi}
\address{M. Alishahi, 
School of Mathematical Sciences,
Shahrood University of Technology, Shahrood, Iran}
\email{meysam\_alishahi@shahroodut.ac.ir}
\author{Hossein Hajiabolhassan}
\address{H. Hajiabolhassan, Department of Applied Mathematics and Computer Science,Technical University of Denmark, DK-{\rm 2800} Lyngby, Denmark, and
Department of Mathematical Sciences, Shahid Beheshti University, G.C., P.O. Box {\rm 19839-69411}, Tehran, Iran}
\email{hhaji@sbu.ac.ir}
\begin{document}
\maketitle

\begin{abstract}
In this work, we present a generalization of Gale's lemma. Using this generalization, we introduce two combinatorial sharp lower bounds for ${\rm conid}({\rm B}_0(G))+1$ and ${\rm conid}({\rm B}(G))+2$,  two famous topological lower bounds for the chromatic number of a graph $G$. 

\noindent{\bf Keywords:} Gale's lemma, chromatic number of graphs, 
box complex
\end{abstract}
\section{Introduction and Main Results}
\subsection{Backgrounds and Motivations}
Throughout the paper, 
for positive integers $k$ and $n$, two symbols $[n]$ and ${[n]\choose k}$
stand for the set $\{1,\ldots,n\}$ and the family of all $k$-subsets of $[n]$, respectively.
For a positive integer $s$, 
a subset $A$ of $[n]$ is said to be an $s$-stable 
subset if $s\leq |i-j|\leq n-s$ for each $i\neq j\in A$. Throughout the paper, the family of all $s$-stable $k$-subsets of $[n]$ is denoted by ${[n]\choose k}_{s}$.
For $n\geq 2k$, the Kneser graph $\KG(n,k)$ is a graph with vertex set consisting of all $k$-subsets of $[n]$ as vertex set and two vertices are adjacent if their corresponding 
$k$-sets are disjoint. Kneser~1955~\cite{MR0068536} proved that $\KG(n,k)$ can be properly colored with $n-2k+2$ colors. He also conjectured that this is the best possible, i.e., $\chi(\KG(n,k))\geq n-2k+2$. In~1978, Lov\'asz in a fascinating paper~\cite{MR514625}, using the Borsuk-Ulam theorem, proved this conjecture.

For an integer $d\geq -1$, by the symbol $S^d$, we mean the $d$-dimensional sphere. 
For an $x\in S^d$, $H(x)$ is the open hemisphere centered at $x$, i.e., 
$H(x)=\{y\in S^d\;:\; \langle x\, ,y \rangle>0\}$. 
There is a famous lemma due to Gale~\cite{MR0085552} which asserts  
that for every $k\geq 1$ and every $n\geq 2k$, there is an $n$-set $Z\subset S^{n-2k}$
such that for any $x\in S^{n-2k}$, the open hemisphere $H(x)$ contains at least $k$ points of $Z$. In particular,
if we identify the set $Z$ with $[n]$, then  for any $x\in S^{n-2k}$, the open hemisphere $H(x)$ contains some vertex of ${\rm KG}(n,k)$.
Soon after the announcement of the Lov\'asz breakthrough~\cite{MR514625}, B{\'a}r{\'a}ny~\cite{MR514626} presented a
short proof of the Lov\'asz-Kneser theorem based on Gale's lemma. 
Next, Schrijver~\cite{MR512648} generalized Gale's lemma  by proving that there is an $n$-set $Z\subset S^{n-2k}$ and a suitable identification of $Z$ with $[n]$
such that for any $x\in S^{n-2k}$, the open hemisphere $H(x)$ contains a $2$-stable subset of size at least $k$, i.e.,
$H(x)$ contains some vertex of ${\rm SG}(n,k)$.  Using this generalization, Schrijver~\cite{MR512648} found a vertex-critical subgraph $\SG(n,k)$ of $\KG(n,k)$ which has the same chromatic number as $\KG(n,k)$. To be more specific, the Schrijver graph $\SG(n,k)$ is an induced subgraph of $\KG(n,k)$ whose vertex set consists of all $2$-stable $k$-subsets of $[n]$. 

\subsection{Main Results}
For an $X=(x_1,\ldots,x_n)\in\{+,-,0\}^n$, an alternating subsequence of $X$ is a subsequence of nonzero terms of $X$ such that each of its two consecutive members have  different signs. In other words, $x_{j_1},\ldots,x_{j_m}$ ($1\leq j_1<\cdots<j_m\leq n$) is an alternating subsequence of $X$ if $x_{j_i}\neq 0$ for each $i\in[m]$ and  $x_{j_i}\neq x_{j_{i+1}}$ for  $i=1,\ldots, m-1$. The length of the longest alternating subsequence of $X$ is denoted by $\alt(X)$. We also define $\alt(0,\ldots,0)=0$. Moreover, 
define 
$$X^+=\{j:\; x_j=+\}\quad\mbox{ and }\quad X^-=\{j:\; x_j=-\}.$$
Note that, by abuse of notation, we can write $X=(X^+,X^-)$.

Let $V$ be a nonempty finite set of size $n$.
The signed-power set of $V$, denoted $P_s(V)$, is defined as follows;
$$P_s(V)=\left\{(A,B)\ :\ A,B\subseteq V,\, A\cap B=\varnothing\right\}.$$
For two pairs $(A,B)$ and $(C,D)$ in $P_s(V)$, by $(A,B)\subseteq (C,D)$, we mean
$A\subseteq C$ and $B\subseteq D$. 
Note that $\left(P_s(V),\subseteq\right)$ is a partial order set (poset). 
A signed-increasing property ${\mathcal P}$, is a superset-closed  family ${\mathcal P}\subseteq P_s(V)$, i.e., for any $F_1 \in {\mathcal P}$, if $F_1\subseteq F_2\in P_s(V)$, then $F_2\in {\mathcal P}$. 
Clearly, for any  bijection $\sigma:[n]\longrightarrow V$, the map $X\mapsto X_\sigma=(\sigma(X^+),\sigma(X^-)$ is an identification between $\{+,-,0\}^n$ and $P_s(V)$. 
Let $\sigma:[n]\longrightarrow V$ be a bijection and ${\mathcal P}\subseteq P_s(V)$
be a signed-increasing  property. Define 
$$\alt({\mathcal P},\sigma)=\max\left\{\alt(X):\ X\in\{+,-,0\}^{n} \mbox{ with } \ {X_\sigma}\not\in{\mathcal P}\right\}.$$
Also, define {\it the alternation number of ${\mathcal P}$} to be the following quantity;
$$\alt({\mathcal P})=\ds\min\{\alt({\mathcal P},\sigma): \; \sigma:[n]\longrightarrow V\mbox{ is a bijection}\}.$$

Let $d\geq 0$ be an integer, $S^d$ be the $d$-dimensional sphere,  and $Z\subset S^d$ be a finite set.
For an $x\in S^d$, define $Z_x=(Z_x^+,Z_x^-)\in P_s(Z)$ such that $Z_x^+=H(x)\cap Z$ and $Z_x^-=H(-x)\cap Z$. 
Now, we are in a position to state the first main result of this paper, which is a generalization of  Gale's lemma.
\begin{lemma}\label{galegen}
Let $n$ be a positive integer, $V$ be an $n$-set, and $\sigma:[n]\longrightarrow V$ be a bijection.
Also, let ${\mathcal P}\subseteq P_s(V)$ be a signed-increasing property and set
$d=n-alt({\mathcal P},\sigma)-1$. If $d\neq -1$, then there are a multiset $Z\subset S^{d}$ of size $n$ and a suitable identification of $Z$ with $V$ such that for any $x\in S^{d}$,  $Z_x\in{\mathcal P}$. In particular, for $d \geq 1$, $Z$ can be a set.
\end{lemma}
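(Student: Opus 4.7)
My plan is to realize $Z$ as the image of $n$ points on the moment curve under an alternating sign flip. Concretely, I would pick real numbers $t_1<t_2<\cdots<t_n$, let $\gamma(t)=(1,t,t^2,\ldots,t^d)\in\R^{d+1}$, and define
$$z_i \;=\; \frac{(-1)^i\,\gamma(t_i)}{\|\gamma(t_i)\|}\in S^d, \qquad i=1,\ldots,n,$$
with the identification $z_i\leftrightarrow\sigma(i)\in V$. When $d\geq 1$, the points $z_1,\ldots,z_n$ are pairwise distinct, because the first coordinate of $\gamma$ is the constant $1$ so no two $\gamma(t_i)$ are scalar multiples of each other; when $d=0$ we merely obtain a multiset in $S^0=\{\pm 1\}$, matching the statement.

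For a given $x\in S^d$, set $f(t)=\langle x,\gamma(t)\rangle$: this is a polynomial in $t$ of degree at most $d$, and
$$X_i \;:=\; \operatorname{sgn}\langle x,z_i\rangle \;=\; \operatorname{sgn}\bigl((-1)^i f(t_i)\bigr).$$
The heart of the proof is the purely combinatorial claim that $\alt(X)\geq n-d$ for every $x$. Granting this, the lemma follows immediately: since $n-d=\alt(\mathcal{P},\sigma)+1>\alt(\mathcal{P},\sigma)$, the definition of $\alt(\mathcal{P},\sigma)$ forces $X_\sigma$, which equals $Z_x$ under our identification, to lie in $\mathcal{P}$.

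I would prove the key claim by induction on $d$. The base $d=0$ is immediate: $f$ is a nonzero constant, $X=((-1)^i\operatorname{sgn}(f))_i$ is fully alternating, and $\alt(X)=n$. For the inductive step, split on whether $f$ vanishes on $\{t_1,\ldots,t_n\}$. If $f(t_i)\neq 0$ for every $i$, the $(-1)^i$ twist forces $X_i$ and $X_{i+1}$ to disagree exactly when $f(t_i)$ and $f(t_{i+1})$ agree in sign; hence the number of sign changes of $X$ equals $(n-1)$ minus the number of sign changes of $(f(t_i))_i$, and the latter is at most $d$ (by the intermediate value theorem, since $\deg f\leq d$), yielding $\alt(X)\geq n-d$. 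If instead $f(t_j)=0$ for some $j$, factor $f(t)=(t-t_j)\tilde f(t)$ with $\deg\tilde f\leq d-1$, and apply the inductive hypothesis to $\tilde f$ at the $n-1$ parameters $t_1,\ldots,\widehat{t_j},\ldots,t_n$. A short bookkeeping using $\operatorname{sgn}(t_k-t_j)=-1$ for $k<j$ and $+1$ for $k>j$, combined with the parity shift $k\mapsto k-1$ induced by deleting position $j$, shows that the associated sign vector $\tilde X$ and the restriction of $X$ to $[n]\setminus\{j\}$ agree up to a uniform global sign. Since $\alt$ is invariant both under inserting a zero and under a global sign flip, $\alt(X)=\alt(\tilde X)\geq (n-1)-(d-1)=n-d$.

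The main obstacle is precisely this sign-and-index bookkeeping in the second case: one must check that the extra factor $\operatorname{sgn}(t_k-t_j)$ and the parity shift from deleting $t_j$ combine into the \emph{same} global sign on both sides of $j$, rather than producing opposite flips on the two halves. Once that calculation is verified, the rest of the argument is short and explicit, and the clause ``$Z$ can be a set for $d\geq 1$'' follows from the distinctness observation on the moment curve noted at the start.
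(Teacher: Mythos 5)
Your proposal is correct, and your construction of $Z$ is exactly the paper's: the points $\gamma(t_i)$ on the moment curve $\gamma(t)=(1,t,\ldots,t^d)$, normalized and twisted by the alternating signs $(-1)^i$, together with the same observation that the first two coordinates force distinctness when $d\geq 1$. Where you diverge is in the verification. The paper argues by contradiction: assuming $Z_y\notin\mathcal{P}$, it perturbs the hyperplane bounding $H(y)$ to a generic position meeting exactly $d$ points of $W$ without any point changing sides, reads off an alternating vector $X$ with $n-d$ nonzero entries, and then must invoke the signed-increasing hypothesis because its $X_\sigma$ is only a sub-pair of $Z_y$ (points landing on the perturbed hyperplane are zeroed out). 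You instead prove the stronger direct statement that the full sign vector $X_i=\operatorname{sgn}\langle x,z_i\rangle$ satisfies $\alt(X)\geq n-d$ for every $x$, by induction on the degree of the polynomial $f(t)=\langle x,\gamma(t)\rangle$: counting sign changes of $f$ at the $t_i$ via its at most $d$ real roots in the root-free case, and factoring out $(t-t_j)$ otherwise. I checked the sign bookkeeping you flag as the main obstacle: for $k<j$ the factor $\operatorname{sgn}(t_k-t_j)=-1$ supplies the flip, while for $k>j$ the parity shift $k\mapsto k-1$ supplies it, so both halves acquire the \emph{same} global sign and $\alt$ is preserved, as you claim. Your route buys two things: it replaces the paper's somewhat informal ``move the hyperplane continuously'' step with an explicit algebraic induction, and --- since your $X_\sigma$ equals $Z_x$ on the nose --- it concludes directly from the definition of $\alt(\mathcal{P},\sigma)$ without ever using that $\mathcal{P}$ is superset-closed.
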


A hypergraph $\mathcal{H}$ is a pair $(V(\mathcal{H}),E(\mathcal{H}))$ where $V(\mathcal{H})$ is a finite nonempty set, called the vertex set of $\mathcal{H}$, and $E(\mathcal{H})$ is a family  containing some nonempty distinct subsets of $V(\mathcal{H})$, called the edge set of $\mathcal{H}$. For a set $U\subseteq V(\mathcal{H})$, the induced  
subhypergraph $\mathcal{H}[U]$ of $\mathcal{H}$, is a 
hypergraph  with vertex set $U$ and edge set $\{e\in E(\mathcal{H})\;:\; e\subseteq U\}$. 
A graph $G$ is a hypergraph such that each of its edges has cardinality two.
A $t$-coloring of a hypergraph $\mathcal{H}$  is a map $c:V(\mathcal{H})\longrightarrow [t]$
such that for no edge $e\in E(\mathcal{H})$, we have $|c(e)|=1$.
The minimum possible $t$ for which $\mathcal{H}$ admits a $t$-coloring, denoted $\chi(\mathcal{H})$, is called the chromatic number of $\mathcal{H}$. 
For a hypergraph $\mathcal{H}$, the Kneser graph $\KG(\mathcal{H})$ is a graph whose vertex set  is $E(\mathcal{H})$ and two vertices are adjacent if their corresponding edges are vertex disjoint.
It is known that for any graph $G$, there are several hypergraphs $\mathcal{H}$ such that
$G$ and $\KG(\mathcal{H})$ are isomorphic. Each of such hypergraphs $\mathcal{H}$ is called a Kneser representation of $G$.
Note that if we set $K_n^k=\left([n],{[n]\choose k}\right)$ and $\widetilde{K_n^k}=\left([n],{[n]\choose k}_{2}\right)$, then $\KG(K_n^k)=\KG(n,k)$ and $\KG(\widetilde{K_n^k})=\SG(n,k)$.
The {\it colorability defect} of a hypergraph $\mathcal{H}$, denoted $\cd(\mathcal{H})$, 
is the minimum number of vertices which should be excluded so that the induced subhypergraph on the remaining vertices is $2$-colorable.
Dol'nikov~\cite{MR953021} improved Lov\'asz's result~\cite{MR514625} by proving that for any hypergraph $\mathcal{H}$, we have
$\chi(\KG(\mathcal{H}))\geq\cd(\mathcal{H}).$

Let ${\mathcal H}=(V,E)$ be a hypergraph and $\sigma:[n]\longrightarrow V(\mathcal{H})$ 
be a bijection. Define
$$\alt(\mathcal{H},\sigma)=\max\left\{\alt(X):\; X\in\{+,-,0\}^n\mbox{ s.t. } \max\left(|E(\mathcal{H}[\sigma(X^+)])|,|E(\mathcal{H}[\sigma(X^-)])|\right)=0\right\}$$ 
and
$$\salt(\mathcal{H},\sigma)=\max\left\{\alt(X):\; X\in\{+,-,0\}^n\mbox{ s.t. } \ds\min\left(\left|E(\mathcal{H}[\sigma(X^+)])\right|,|E(\mathcal{H}[\sigma(X^-)])|\right)=0\right\}.$$ 
In other words, $\alt(\mathcal{H},\sigma)$ (resp. $\salt(\mathcal{H},\sigma)$) is the maximum possible $\alt(X)$, where $X\in\{+,-,0\}^n$, such that each of (resp. at least one of) $\sigma(X^+)$ and $\sigma(X^+)$ contains no edge of $\mathcal{H}$.	

Also, we define
$$\alt(\mathcal{H})=\ds\min_{\sigma}\alt(\mathcal{H},\sigma)\quad \mbox{and} \quad
\salt(\mathcal{H})=\ds\min_{\sigma}\salt(\mathcal{H},\sigma),$$
where the minimum is taken over all bijections $\sigma:[n]\longrightarrow V(\mathcal{H})$.
The present authors, using Tucker's lemma~\cite{MR0020254}, introduced  two combinatorial tight lower bounds for the chromatic number of $\KG(\mathcal{H})$ improving  Dol'nikov's lower bound.
\begin{alphtheorem}{\rm \cite{2013arXiv1302.5394A}}\label{alihajijctb}
For any hypergraph $\mathcal{H}$, we have
$$\chi(\KG(\mathcal{H}))\geq\ds\max\left(|V(\mathcal{H})|-\alt(\mathcal{H}),|V(\mathcal{H})|-\salt(\mathcal{H})+1\right).$$
\end{alphtheorem}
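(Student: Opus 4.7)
The plan is to derive each of the two chromatic lower bounds separately by first converting the generalization of Gale's lemma into an antipodal open cover of a sphere, and then extracting the numerical bound via either Lusternik--Schnirelmann or Borsuk--Ulam. Write $n=|V(\mathcal{H})|$, $t=\chi(\KG(\mathcal{H}))$, and fix a proper $t$-coloring $c:E(\mathcal{H})\to[t]$ of $\KG(\mathcal{H})$. I introduce two signed-increasing properties on $P_s(V(\mathcal{H}))$:
$$\mathcal{P}_1=\{(A,B)\in P_s(V(\mathcal{H})):\ A\ \text{or}\ B\ \text{contains some}\ e\in E(\mathcal{H})\},$$
$$\mathcal{P}_2=\{(A,B)\in P_s(V(\mathcal{H})):\ A\ \text{and}\ B\ \text{both contain some}\ e\in E(\mathcal{H})\}.$$
Both are superset-closed, and a direct comparison of definitions gives $\alt(\mathcal{P}_1,\sigma)=\alt(\mathcal{H},\sigma)$ and $\alt(\mathcal{P}_2,\sigma)=\salt(\mathcal{H},\sigma)$ for every bijection $\sigma$, so $\alt(\mathcal{P}_1)=\alt(\mathcal{H})$ and $\alt(\mathcal{P}_2)=\salt(\mathcal{H})$. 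Applying Lemma~\ref{galegen} to a $\sigma$ attaining the minimum in each case yields, for $j\in\{1,2\}$, a size-$n$ multiset $Z_j\subset S^{d_j}$ identified with $V(\mathcal{H})$, where $d_1=n-\alt(\mathcal{H})-1$ and $d_2=n-\salt(\mathcal{H})-1$, and such that $(Z_j)_x\in\mathcal{P}_j$ for every $x\in S^{d_j}$.

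From the coloring $c$ I define, on $S^{d_j}$, the open sets
$$U_i=\{x\in S^{d_j}:\ H(x)\cap Z_j\ \text{contains an edge}\ e\ \text{with}\ c(e)=i\},\qquad i\in[t].$$
Since $H(x)\cap H(-x)=\varnothing$, if $U_i\cap(-U_i)$ were nonempty then $H(x)$ and $H(-x)$ would contain two disjoint edges of the same color, contradicting the properness of $c$; hence $U_i\cap(-U_i)=\varnothing$ for every $i$. For the $\salt$ bound ($j=2$), Lemma~\ref{galegen} guarantees that $H(x)\cap Z_2$ itself already contains an edge for every $x\in S^{d_2}$, so $\{U_i\}_{i\in[t]}$ is an antipodal-free open cover of $S^{d_2}$, and the Lusternik--Schnirelmann theorem forces $t\ge d_2+2=n-\salt(\mathcal{H})+1$.

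The $\alt$ case is the genuinely delicate step, and where I expect the main obstacle to lie. Lemma~\ref{galegen} applied to $\mathcal{P}_1$ only ensures that $H(x)\cap Z_1$ or $H(-x)\cap Z_1$ contains an edge, so $\{U_i\}\cup\{-U_i\}$ covers $S^{d_1}$, but the $U_i$ alone need not; a naive appeal to Lusternik--Schnirelmann on this $2t$-set cover would give only $2t\ge d_1+2$, losing a factor of two. To recover the full strength I exploit the built-in $\mathbb{Z}_2$-pairing between $U_i$ and $-U_i$ by packaging the cover into a single antipodal map via signed distances:
$$g:S^{d_1}\to\mathbb{R}^t,\qquad g_i(x)=\mathrm{dist}\bigl(x,\,S^{d_1}\setminus(-U_i)\bigr)-\mathrm{dist}\bigl(x,\,S^{d_1}\setminus U_i\bigr).$$
Continuity is immediate, and $g(-x)=-g(x)$ follows from $-(S^{d_1}\setminus U_i)=S^{d_1}\setminus(-U_i)$. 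The key step is to verify $g(x)\neq 0$ for every $x\in S^{d_1}$: the antipodal-total cover puts $x$ into some $U_j$ or $-U_j$; in the former case $\mathrm{dist}(x,S^{d_1}\setminus U_j)>0$, so $g_j(x)=0$ would force $x\in -U_j$, contradicting $U_j\cap(-U_j)=\varnothing$, and the latter case is symmetric. Normalising then yields a continuous antipodal map $S^{d_1}\to S^{t-1}$, and Borsuk--Ulam gives $d_1\le t-1$, i.e.\ $t\ge n-\alt(\mathcal{H})$, which completes the proof.
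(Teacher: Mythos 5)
Your proof is correct, but it takes a genuinely different route from the paper's. The paper deduces Theorem~\ref{alihajijctb} from the stronger Theorem~\ref{coind}: starting from the same Gale-type configurations supplied by Lemma~\ref{galegen} (via Corollary~\ref{galegencor}), it builds explicit $\mathbb{Z}_2$-maps $S^{d}\to ||B_0(\KG(\mathcal{H}))||$ and $S^{d}\to ||B(\KG(\mathcal{H}))||$ out of the distance functions $D_A(x)$, thereby proving ${\rm coind}(B_0(G))+1\geq |V(\mathcal{H})|-\alt(\mathcal{H})$ and ${\rm coind}(B(G))+2\geq |V(\mathcal{H})|-\salt(\mathcal{H})+1$, and then quotes the chain of topological lower bounds in~(\ref{lbchrom}). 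You instead feed the proper coloring directly into the sphere: an antipodal-free open cover plus Lusternik--Schnirelmann for the $\salt$ bound, and a signed-distance odd map plus Borsuk--Ulam for the $\alt$ bound. Your device for avoiding the loss of a factor of two in the $2t$-set cover is exactly right, and the whole argument is essentially the B\'{a}r\'{a}ny/Dol'nikov scheme; note that the paper itself uses the covering form of Borsuk--Ulam in precisely this way in its proof of Chen's theorem on $\chi_m(\KG(n,k)_s)$. What your route buys is self-containedness --- no box complexes and no appeal to~(\ref{lbchrom}) --- at the cost of the intermediate coindex estimates, which are the real point of Theorem~\ref{coind} (they place the combinatorial bounds beneath the topological ones and give a means of computing ${\rm coind}(B_0(G))$ and ${\rm coind}(B(G))$). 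Two small housekeeping items: dispose of the degenerate case $d_j=-1$, where Lemma~\ref{galegen} does not apply, by observing that the corresponding bound is then trivial (the paper does this with its ``if $d\leq 0$'' remark); and in the nonvanishing step for $g$, note that $S^{d_1}\setminus U_j\neq\varnothing$ (otherwise $U_j$ would contain an antipodal pair), so the distances involved are finite.
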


Let ${\mathcal H}=(V,E)$ be a hypergraph and $\sigma:[n]\longrightarrow V(\mathcal{H})$ 
be a bijection.
Note that if we set
$${\mathcal P}_1=\left\{(A,B)\in P_s(V):\mbox{ at least one of $A$ and $B$ contains  some edge of } {\mathcal H}\right\}$$
and
$${\mathcal P}_2=\left\{(A,B)\in P_s(V):\mbox{ both of $A$ and $B$ contain  some edges of } {\mathcal H}\right\},$$
then  $\alt({\mathcal P}_1,\sigma)=\alt({\mathcal H},\sigma)$ and  $\alt({\mathcal P}_2,\sigma)=\salt({\mathcal H},\sigma)$. Therefore, in view of Lemma~\ref{galegen}, 
we have the next result.
\begin{corollary}\label{galegencor}
For a hypergraph ${\mathcal H}=(V, E)$ and a bijection $\sigma:[n]\longrightarrow V(\mathcal{H})$,
we have the following assertions.
\begin{itemize}
\item[{\rm a)}]If $d=|V|-\alt({\mathcal H},\sigma)-1$ and $\alt({\mathcal H},\sigma)\neq |V|$, then there are a  multiset $Z\subset S^{d}$ of size $|V|$ and a suitable identification of $Z$ with $V$
such that for any $x\in S^{d}$, $H(x)$ or $H(-x)$ contains some edge of ${\mathcal H}$. 
In particular, for $d\geq 1$, $Z$ can be a set.
\item[{\rm b)}]If $d=|V|-\salt({\mathcal H},\sigma)-1$ and $\salt({\mathcal H},\sigma)\neq |V|$, then there are a multiset $Z\subset S^{d}$ of size $|V|$ and a suitable identification of $Z$ with $V$
such that for any $x\in S^{d}$, $H(x)$ contains some edge of ${\mathcal H}$.
In particular, for $d\geq 1$, $Z$ can be a set.
\end{itemize}
\end{corollary}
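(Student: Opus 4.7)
The plan is to derive this corollary as a direct specialization of Lemma~\ref{galegen} to the two signed-increasing properties $\mathcal{P}_1$ and $\mathcal{P}_2$ introduced in the paragraph immediately above the statement. No new topological input is needed; all the work lies in translating the combinatorial conditions correctly.

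First, I would verify that $\mathcal{P}_1$ and $\mathcal{P}_2$ are genuinely signed-increasing subsets of $P_s(V)$. If $(A,B)\in\mathcal{P}_1$, there is an edge $e\in E(\mathcal{H})$ with $e\subseteq A$ or $e\subseteq B$; since $(A,B)\subseteq(C,D)$ forces $A\subseteq C$ and $B\subseteq D$, the same edge witnesses $(C,D)\in\mathcal{P}_1$. Applying this argument in both coordinates handles $\mathcal{P}_2$. This is the routine sanity check needed to invoke Lemma~\ref{galegen}.

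Next, I would unwind definitions to match alternation numbers. Writing $X_\sigma=(\sigma(X^+),\sigma(X^-))$, the condition $X_\sigma\notin\mathcal{P}_1$ says that \emph{neither} $\sigma(X^+)$ nor $\sigma(X^-)$ contains an edge of $\mathcal{H}$, which is exactly the constraint $\max(|E(\mathcal{H}[\sigma(X^+)])|,|E(\mathcal{H}[\sigma(X^-)])|)=0$ appearing in the definition of $\alt(\mathcal{H},\sigma)$. Similarly, $X_\sigma\notin\mathcal{P}_2$ says that \emph{at least one} of $\sigma(X^+),\sigma(X^-)$ contains no edge, which is the $\min(\cdots)=0$ clause defining $\salt(\mathcal{H},\sigma)$. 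Consequently $\alt(\mathcal{P}_1,\sigma)=\alt(\mathcal{H},\sigma)$ and $\alt(\mathcal{P}_2,\sigma)=\salt(\mathcal{H},\sigma)$, as already noted in the paragraph preceding the corollary.

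Finally, I would invoke Lemma~\ref{galegen} twice. For part (a), with $d=|V|-\alt(\mathcal{H},\sigma)-1$ and $d\neq -1$ (the hypothesis $\alt(\mathcal{H},\sigma)\neq|V|$), the lemma applied to $\mathcal{P}_1$ yields a multiset $Z\subset S^d$ of size $|V|$ and an identification of $Z$ with $V$ such that $Z_x\in\mathcal{P}_1$ for every $x\in S^d$. Since $Z_x^+=H(x)\cap Z$ and $Z_x^-=H(-x)\cap Z$, this literally says that $H(x)$ or $H(-x)$ contains an edge of $\mathcal{H}$ under the identification. Part (b) follows by the same template applied to $\mathcal{P}_2$; note that $Z_x\in\mathcal{P}_2$ actually places an edge in \emph{both} hemispheres, which is (strictly) stronger than the claimed statement, so the conclusion that $H(x)$ alone contains an edge is immediate. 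The clause about $Z$ being a set whenever $d\geq 1$ is inherited verbatim from Lemma~\ref{galegen}. Because the corollary is essentially a reformulation, I do not anticipate any real obstacle; the only point requiring care is the bookkeeping in the second step, ensuring that the combinatorial definitions of $\alt(\mathcal{H},\sigma)$ and $\salt(\mathcal{H},\sigma)$ coincide exactly with non-membership in $\mathcal{P}_1$ and $\mathcal{P}_2$.
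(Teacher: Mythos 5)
Your proposal is correct and follows exactly the route the paper takes: the corollary is stated as an immediate consequence of Lemma~\ref{galegen} applied to the signed-increasing properties $\mathcal{P}_1$ and $\mathcal{P}_2$, using the identities $\alt(\mathcal{P}_1,\sigma)=\alt(\mathcal{H},\sigma)$ and $\alt(\mathcal{P}_2,\sigma)=\salt(\mathcal{H},\sigma)$. Your added verifications (that $\mathcal{P}_1,\mathcal{P}_2$ are superset-closed and that non-membership matches the $\max$/$\min$ clauses) are exactly the routine checks the paper leaves implicit.
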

For two positive integers $n$ and $k$, where $n> 2k$, 
and for the identity bijection $I:[n]\longrightarrow [n]$, one can see that 
$\salt(\widetilde{K_n^k},I)=2k-1$. Therefore, by the second part of 
Corollary~\ref{galegencor}, we have the generalization of 
Gale's lemma given by Schrijver~\cite{MR512648}: 
{\it there is an $n$-subset $Z$ of $S^{n-2k}$ and a suitable identification 
of $Z$ with $[n]$ such that
for any $x\in S^{n-2k}$, the hemisphere $H(x)$ contains at least a $2$-stable subset of $[n]$ with size at least $k$.}

Note that for any graph $G$, there are several hypergraphs $\mathcal{H}$ 
such that $\KG(\mathcal{H})$ and $G$ are isomorphic. 
By the help of Lemma~\ref{galegen} and as the second main result of this paper, 
we provide two combinatorial approximations for two important topological 
lower bounds for the chromatic number of a graph $G$, namely, $ {\rm coind}(B_0(G))+1$ and ${\rm coind}(B(G))+2$, see~\cite{MR2452828}. The quantities ${\rm coind}(B_0(G))$ and ${\rm coind}(B(G))$ are respectively the coindices    
of two box complexes ${\rm B}_0(G)$ and ${\rm B}(G)$ which 
will be defined in Section~\ref{Preliminaries}.  It should be mentioned that 
the two inequalities $\chi(G) \geq {\rm coind}(B_0(G))+1$ 
$\chi(G) \geq {\rm coind}(B(G))+2$ 
are already proved~\cite{MR2452828} and we restate them in the following 
theorem just to emphasize that this theorem is 
an improvement of Theorem~\ref{alihajijctb}. 
\begin{theorem}\label{coind}
Let $G$ be a graph and $\mathcal{H}$ be a hypergraph such that $\KG(\mathcal{H})$ and 
$G$ are isomorphic. Then the following inequalities hold;
\begin{itemize}
\item[{\rm a)}]  $\chi(G) \geq {\rm coind}(B_0(G))+1\geq |V(\mathcal{H})|-\alt(\mathcal{H})$,
\item[{\rm b)}]  $\chi(G) \geq {\rm coind}(B(G))+2\geq  |V(\mathcal{H})|-\salt(\mathcal{H})+1$.
\end{itemize}
\end{theorem}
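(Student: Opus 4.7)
The plan is to establish the second inequality in each part; the first inequalities $\chi(G)\geq \coind(B_0(G))+1$ and $\chi(G)\geq \coind(B(G))+2$ are standard (see~\cite{MR2452828}) and only restated here for emphasis. In both parts the idea is to build an explicit continuous $\Z_2$-equivariant map from a sphere into the relevant box complex, using the point configuration supplied by Corollary~\ref{galegencor}.

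For part (a), set $d=|V(\mathcal{H})|-\alt(\mathcal{H})-1$; if $d<0$ the bound is vacuous, so assume $d\geq 0$. Choose a bijection $\sigma$ attaining $\alt(\mathcal{H},\sigma)=\alt(\mathcal{H})$. Corollary~\ref{galegencor}(a) then produces a (multi)set $Z\subset S^{d}$ of size $|V(\mathcal{H})|$, identified with $V(\mathcal{H})$, such that for every $x\in S^{d}$ at least one of $H(x),H(-x)$ contains a hyperedge of $\mathcal{H}$. For each $e\in E(\mathcal{H})=V(G)$ define
$$\lambda_e^+(x)=\max\Bigl\{0,\,\min_{v\in e}\langle v,x\rangle\Bigr\},\qquad \lambda_e^-(x)=\lambda_e^+(-x),$$
so $\lambda_e^\pm(x)>0$ exactly when $e$ lies in the corresponding open hemisphere. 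Writing $(e,+),(e,-)$ for the two $\Z_2$-copies of $e$ in $B_0(G)$, set
$$f(x)=\frac{1}{\Lambda(x)}\sum_{e\in E(\mathcal{H})}\bigl(\lambda_e^+(x)\,(e,+)+\lambda_e^-(x)\,(e,-)\bigr),\qquad \Lambda(x)=\sum_{e}\bigl(\lambda_e^+(x)+\lambda_e^-(x)\bigr).$$
Then $\Lambda(x)>0$ everywhere by the choice of $Z$, so $f$ is well defined and continuous, and it is $\Z_2$-equivariant because $\lambda_e^+(-x)=\lambda_e^-(x)$. The support of $f(x)$ is $(V_x^+\times\{+\})\cup(V_x^-\times\{-\})$ with $V_x^\pm=\{e\in E(\mathcal{H}):e\subseteq H(\pm x)\}$; any $e\in V_x^+$ and $e'\in V_x^-$ are disjoint in $V(\mathcal{H})$ and so adjacent in $G=\KG(\mathcal{H})$, so $(V_x^+,V_x^-)$ spans a simplex of $B_0(G)$. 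Hence $f\colon S^{d}\to B_0(G)$ is a $\Z_2$-map and $\coind(B_0(G))\geq d$.

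For part (b), run the same construction with Corollary~\ref{galegencor}(b) in place of (a) and a bijection attaining $\salt(\mathcal{H})$, setting $d=|V(\mathcal{H})|-\salt(\mathcal{H})-1$. Now $H(x)$ contains an edge of $\mathcal{H}$ for every $x$, so both $V_x^+$ and $V_x^-$ are nonempty throughout $S^{d}$. In particular any $e'\in V_x^-$ is a common $G$-neighbor of all elements of $V_x^+$ and vice versa, which is precisely the additional condition promoting a simplex of $B_0(G)$ to a simplex of $B(G)$. The same $f$ therefore lands in $B(G)$, yielding $\coind(B(G))\geq d$ and thus $\coind(B(G))+2\geq |V(\mathcal{H})|-\salt(\mathcal{H})+1$.

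The one step that is not routine is checking that the support of $f(x)$ spans a simplex of $B_0(G)$ (respectively $B(G)$); this is exactly where the different conclusions of Corollary~\ref{galegencor}(a) and (b) get matched against the defining conditions of the two box complexes recalled in Section~\ref{Preliminaries}. Continuity, strict positivity of $\Lambda$, and $\Z_2$-equivariance are immediate from the formulas.
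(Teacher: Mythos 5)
Your proof is correct and follows essentially the same route as the paper: both reduce to Corollary~\ref{galegencor}, then build an explicit $\mathbb{Z}_2$-map $S^d\to \|B_0(G)\|$ (resp.\ $\|B(G)\|$) as a normalized convex combination of vertex weights that are positive exactly when the corresponding edge lies in the relevant open hemisphere. The only differences are cosmetic: you use $\max\{0,\min_{v\in e}\langle v,x\rangle\}$ where the paper uses the distance from $e$ to $S^d\setminus H(x)$, and in part (b) you keep a single global normalization where the paper normalizes each half separately; both choices work.
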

Note that, in addition to presenting another proof for Theorem~\ref{alihajijctb}, 
Theorem~\ref{coind} also reveals a new way to compute  
parameters ${\rm coind}(B_0(G))$ and ${\rm coind}(B(G))$ for 
a graph $G$ as well.
Also, it is worth noting that the results in~\cite{2013arXiv1302.5394A, 2014arXiv1401.0138A, MatchingNew} confirm that we can evaluate
the chromatic number of some family of graphs by computing $\alt(-)$ or $\salt(-)$ for an appropriate choice of a hypergraph, while it seems 
that it is~not easy to directly determine 
the amounts of ${\rm coind}(B_0(-))+1$ and
${\rm coind}(B(-))+2$ for these graphs by topological methods.\\

\noindent{\bf Remark.} Motivated by Corollary~\ref{galegencor},  we can assign to any hypergraph $\mathcal{H}$ two topological parameters; the
{\it dimension of $\mathcal{H}$} and the {\it strong dimension of $\mathcal{H}$} denoted by ${\rm dim}(\mathcal{H})$ and ${\rm sdim}(\mathcal{H})$, respectively.
The dimension of $\mathcal{H}$ (resp. strong dimension of $\mathcal{H}$)  is the 
maximum integer $d\geq -1$ such that the vertices of  $\mathcal{H}$ 
can be identified with a multiset $Z\subset S^d$  
so that for any $x\in S^d$, at least one of (resp. both of ) open hemispheres $H(x)$ and $H(-x)$ contains some edge of $\mathcal{H}$. 
In view of Corollary~\ref{galegencor} and with a proof similar to the one of Theorem~\ref{coind}, one can prove the following corollary. 
\begin{corollary}
For a hypergraph $\mathcal{H}$, we have the following inequalities;
\begin{itemize}
\item[{\rm a)}]  ${\rm coind}(B_0(\KG(\mathcal{H})))\geq {\rm dim}(\mathcal{H}) \geq |V(\mathcal{H})|-\alt(\mathcal{H})-1$,
\item[{\rm b)}]  ${\rm coind}(B(\KG(\mathcal{H})))\geq {\rm sdim}(\mathcal{H}) \geq |V(\mathcal{H})|-\salt(\mathcal{H})-1$.
\end{itemize}
\end{corollary}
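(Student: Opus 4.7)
The plan is to split each two-inequality chain into two independent pieces: the rightmost inequality is read off from Corollary~\ref{galegencor}, while the leftmost one follows by building an explicit continuous $\Z_2$-equivariant map $S^d\to B_0(\KG(\mathcal{H}))$ (respectively $S^d\to B(\KG(\mathcal{H}))$) from the sphere data that witnesses ${\rm dim}(\mathcal{H})$ (respectively ${\rm sdim}(\mathcal{H})$). This shows that $\coind$ of the relevant box complex is at least the corresponding dimensional parameter.

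For part (a), I would first pick a bijection $\sigma$ with $\alt(\mathcal{H},\sigma)=\alt(\mathcal{H})$; Corollary~\ref{galegencor}(a) gives a multiset $Z\subset S^d$ of size $|V(\mathcal{H})|$, identified with $V(\mathcal{H})$, where $d=|V(\mathcal{H})|-\alt(\mathcal{H})-1$, such that for every $x\in S^d$ at least one of $H(x),H(-x)$ contains some edge of $\mathcal{H}$; by definition this is ${\rm dim}(\mathcal{H})\geq d$. For the left inequality, set $d = {\rm dim}(\mathcal{H})$ and fix a witnessing identification $V(\mathcal{H})\leftrightarrow Z\subset S^d$. For each $e\in E(\mathcal{H})$, define
$$w_e^+(x)=\max\Bigl(\min_{v\in e}\langle v,x\rangle,\ 0\Bigr),\qquad w_e^-(x)=w_e^+(-x),$$
so that $w_e^+(x)>0$ iff $e\subseteq H(x)\cap Z$. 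The hypothesis forces $N(x)=\sum_e\bigl(w_e^+(x)+w_e^-(x)\bigr)$ to be strictly positive on the compact sphere $S^d$, so the formula
$$f(x)=\frac{1}{N(x)}\sum_{e\in E(\mathcal{H})}\bigl(w_e^+(x)\cdot(e,+)+w_e^-(x)\cdot(e,-)\bigr)$$
is a well-defined continuous map into the realization of $B_0(\KG(\mathcal{H}))$, and the identity $w_e^+(-x)=w_e^-(x)$ makes it $\Z_2$-equivariant. It remains to check that, writing $A^\pm=\{e\in E(\mathcal{H}):e\subseteq H(\pm x)\}$, the support $A^+\sqcup A^-$ of $f(x)$ really is a simplex of $B_0(\KG(\mathcal{H}))$: any $e\in A^+$ and $e'\in A^-$ satisfy $e\cap e'\subseteq H(x)\cap H(-x)=\varnothing$, hence are adjacent vertices of $\KG(\mathcal{H})$, which is precisely the simplex condition in $B_0$. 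Thus $\coind(B_0(\KG(\mathcal{H})))\geq {\rm dim}(\mathcal{H})$.

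Part (b) uses the same construction starting from a witness for ${\rm sdim}(\mathcal{H})$; for the rightmost inequality, combine this with Corollary~\ref{galegencor}(b) applied to a bijection minimizing $\salt(\mathcal{H})$. The key additional feature is that the strong-dimension hypothesis guarantees that both $A^+$ and $A^-$ are nonempty for every $x\in S^d$, which upgrades $A^+\sqcup A^-$ from a simplex of $B_0(\KG(\mathcal{H}))$ to one of $B(\KG(\mathcal{H}))$, yielding $\coind(B(\KG(\mathcal{H})))\geq {\rm sdim}(\mathcal{H})$. I expect the main friction point to be only cosmetic, namely matching the explicit definition of $B_0(G)$ and $B(G)$ used in~\cite{MR2452828} to the pair $(A^+,A^-)$ above; the analytic content (continuity, strict positivity of $N(x)$, and equivariance) is immediate from the formulas, and the strict positivity is precisely what the dimensional hypothesis was designed to ensure.
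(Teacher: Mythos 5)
Your proposal is correct and follows essentially the same route as the paper, which proves this corollary by combining Corollary~\ref{galegencor} with the hemisphere-weighted $\mathbb{Z}_2$-map construction from the proof of Theorem~\ref{coind}; your weight $w_e^+(x)=\max(\min_{v\in e}\langle v,x\rangle,0)$ is a cosmetic variant of the paper's distance function $D_A(x)$ and plays the identical role. The only (harmless) divergence is in part (b), where the paper normalizes the two halves separately by $\tfrac{1}{2D(\pm x)}$ while you use a single normalization; both land in a simplex of $B(\KG(\mathcal{H}))$ because, as you note, the nonemptiness of both $A^+$ and $A^-$ supplies the common-neighbor condition.
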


This paper is organized as follows.
Section~\ref{Preliminaries} contains a brief review of elementary but essential preliminaries and definitions which will be needed throughout the paper. Section~\ref{proofs} is devoted to the proof of Lemma~\ref{galegen} and Theorem~\ref{coind}. Also, in this section, as an application of the generalization of Gale's lemma (Lemma~\ref{galegen}),  we reprove a result by Chen~\cite{JGT21826} about the multichromatic number of stable Kneser graphs.
\section{Topological Preliminaries}\label{Preliminaries}
The following is a brief overview of some topological concepts needed throughout the paper.
We refer the reader to~\cite{MR1988723, MR2279672}  for more basic definitions of algebraic topology.
A $\mathbb{Z}_2$-space is a pair $(T,\nu)$, where $T$ is a topological space
and $\nu$ is an {\it involution}, i.e., $\nu:T\longrightarrow T$ is a continuous map such that $\nu^2$ is the identity map.
For an $x\in T$, two points $x$ and $\nu(x)$ are called {\it antipodal}.
The $\mathbb{Z}_2$-space $(T,\nu)$ is called {\it free} if
there is no $x\in T$ such that $\nu(x)=x$.
For instance, one can see that the unit sphere $S^d\subset \mathbb{R}^{d+1}$ with the involution given by the antipodal map $-:x\rightarrow -x$ is free.
For two $\mathbb{Z}_2$-spaces $(T_1,\nu_1)$ and $(T_2,\nu_2)$,
a continuous map $f:T_1\longrightarrow T_2$ is a $\mathbb{Z}_2$-map if $f\circ\nu_1=\nu_2\circ f$.
The existence of such a map is denoted by $(T_1,\nu_1)\stackrel{\mathbb{Z}_2}{\longrightarrow} (T_2,\nu_2)$.
For a $\mathbb{Z}_2$-space $(T,\nu)$, we define the $\mathbb{Z}_2$-index and $\mathbb{Z}_2$-coindex of $(T,\nu)$, respectively, as follows
$${\rm ind}(T,\nu)=\min\{d\geq 0\:\ (T,\nu)\stackrel{\mathbb{Z}_2}{\longrightarrow} (S^d,-)\}$$
and
$${\rm coind}(T,\nu)=\max\{d\geq 0\:\ (S^d,-)\stackrel{\mathbb{Z}_2}{\longrightarrow} (T,\nu)\}.$$
If for any $d\geq 0$, there is no $(T,\nu)\stackrel{\mathbb{Z}_2}{\longrightarrow}  (S^d,-)$, then we set ${\rm ind}(T,\nu)=\infty$.
Also, if $(T,\nu)$ is~not free, then ${\rm ind}(T,\nu)={\rm coind}(T,\nu)=\infty$.

For simplicity of notation, when the involution is understood from the context, we speak about $T$ rather than the pair $(T,\nu)$; also,
we set ${\rm ind}(T,\nu)={\rm ind}(T)$ and ${\rm coind}(T,\nu)={\rm coind}(T)$.
Throughout this paper,
we endow the unit sphere $S^d\subset \mathbb{R}^{d+1}$ with the involution given by the antipodal map.
Note that if $T_1\stackrel{\mathbb{Z}_2}{\longrightarrow} T_2$, then ${\rm ind}(T_1)\leq {\rm ind}(T_2)$ and
${\rm coind}(T_1)\leq {\rm coind}(T_2)$. Two $\mathbb{Z}_2$-spaces $T_1$ and $T_2$  are
$\mathbb{Z}_2$-equivalent, denoted $T_1\stackrel{\mathbb{Z}_2}{\longleftrightarrow} T_2$, if $T_1\stackrel{\mathbb{Z}_2}{\longrightarrow} T_2$ and $T_2\stackrel{\mathbb{Z}_2}{\longrightarrow} T_1$. In particular, $\mathbb{Z}_2$-equivalent spaces have the same index and also coindex.

In the following, we introduce the concept of simplicial complex
which provides a bridge between combinatorics and topology. A simplicial complex can be viewed as a combinatorial object, called abstract simplicial complex, or as a topological space, called geometric simplicial complex.
Here we just present the definition of an abstract simplicial complex. However, it should be  mentioned that
we can assign a geometric simplicial complex to an abstract simplicial complex, called its geometric realization, and vice versa.
An {\it abstract simplicial complex} is a pair $L=(V,K)$, where $V$ (the vertex set of $L$) is a set  and $K\subseteq 2^V$ (the set of simplicial complexes of $L$)
is a hereditary collection of subsets of $V$, i.e., if $A\in K$ and $B\subseteq A$, then $B\in K$.
Any set $A\in K$ is called a complex of $L$.
The geometric realization of an abstract simplicial complex  $L$ is denoted by $||L||$.
For two abstract simplicial complexes $L_1=(V_1,K_1)$ and $L_2=(V_2,K_2)$,
a simplicial map $f:L_1\longrightarrow L_2$ is map from $V_1$ into $V_2$ which
preserves the complexes, i.e., if $A\in K_1$, then $f(A)\in K_2$.
A simplicial involution is a simplicial map $\nu: L\longrightarrow L$ such that $\nu^2$
is the identity map.
A $\mathbb{Z}_2$-simplicial complex is a pair $(L,\nu)$ where $L$ is a simplicial complex and $\nu:L\longrightarrow L$ is a simplicial involution.
A simplicial involution $\nu$ and a simplicial complex $(L,\nu)$ is free if there is no face $x$ of $L$ such that $\nu(x)=x$.
For two $\mathbb{Z}_2$-simplicial complexes $(L_1,\nu_1)$ and $(L_2,\nu_2)$,
the map $f:L_1\longrightarrow L_2$ is called a
$\mathbb{Z}_2$-simplicial map if $f$ is a simplicial map and $f\circ\nu_1=\nu_2\circ f$.
The existence of a simplicial map ($\mathbb{Z}_2$-simplicial map) $f:L_1\longrightarrow L_2$ implies the existence of a continuous $\mathbb{Z}_2$-map $||f||:||L_1||\stackrel{\mathbb{Z}_2}{\longrightarrow} ||L_2||$
which is called the geometric realization of $f$. If $||L||$ is a ${\mathbb{Z}_2}$-space, we use ${\rm ind}(L)$ and ${\rm coind}(L)$ for ${\rm ind}(||L||)$ and ${\rm coind}(||L||)$, respectively.

The existence of a homomorphism between two graphs is an important and generally challenging problem in graph theory.
In particular, in general, it is a hard task to determine the chromatic number of a graph $G$.
In the following,  we assign some free simplicial $\mathbb{Z}_2$-complexes to graphs
in such a way that graph homomorphisms give rise to $\mathbb{Z}_2$-maps of the corresponding complexes. For a graph $G=(V(G),E(G))$ and
two disjoint subsets $A, B\subseteq V(G)$, define $G[A,B]$ to be the induced bipartite subgraph of $G$ whose parts are $A$ and $B$.\\

\noindent{\bf Box Complex.}
For a graph $G=(V(G),E(G))$ and a subset $A\subseteq V(G)$, set
$${\rm CN}(A)=\{v\in V(G):\ av\in E(G)\ {\rm for\ all\ } a\in A\ \}\subseteq V(G)\setminus A.$$
The {\it box complex} of a graph $G$, $B(G)$, is a free simplicial $\mathbb{Z}_2$- complex with the vertex set $V(G) \uplus V(G) = V(G)\times[2]$ and the following set of simplices 
$$\{A\uplus B:\ A,B\subseteq V(G),\ A\cap B=\varnothing,\ G[A,B]\ {\rm is\ complete,\ and}\ {\rm CN}(A)\neq\varnothing \neq {\rm CN}(B) \}.$$
 Also, one can consider another box complex $B_0(G)$ with the vertex set $V(G) \uplus V(G) = V(G)\times[2]$ and the following set of simplices 
$$\{A\uplus B:\ A,B\subseteq V(G),\ A\cap B=\varnothing,\ G[A,B]\ {\rm is\ complete} \}.$$

An involution  on $B(G)$ (resp. $B_0(G)$) is given by interchanging the two copies of $V(G)$;
that is, $(v,1)\rightarrow(v,2)$ and $(v, 2)\rightarrow(v, 1)$, for any $v \in V (G)$. In view of these involutions, one can consider
$||B(G)||$ and $||B_0(G)||$
as free ${\mathbb Z}_2$-spaces. One can check that any graph homomorphism $G\rightarrow H$ implies that there are
two  simplicial ${\mathbb Z}_2$-maps $B(G)\stackrel{\mathbb{Z}_2}{\longrightarrow} B(H)$ and $B_0(G)\stackrel{\mathbb{Z}_2}{\longrightarrow} B_0(H)$; and consequently,
${\rm ind}(B(G))\leq {\rm ind}(B(H))$, ${\rm coind}(B(G))\leq {\rm coind}(B(H))$,
${\rm ind}(B_0(G))\leq {\rm ind}(B_0(H))$, and ${\rm coind}(B_0(G))\leq {\rm coind}(B_0(H))$.
One can check that $B(K_n)$ and  $B_0(K_n)$ are $\mathbb{Z}_2$-equivalent to $S^{n-2}$ and
$S^{n-1}$, respectively. Hence, $\chi(G)\geq {\rm ind}(B(G))+2\geq {\rm coind}(B(G))+2$ and
$\chi(G)\geq {\rm ind}(B_0(G))+1\geq {\rm coind}(B_0(G))+1$.
Indeed, it is known (see~\cite{MR1988723, MR2279672, MR2452828})
\begin{equation}\label{lbchrom}
\chi(G)\geq {\rm ind}(B(G))+2 \geq {\rm ind}(B_0(G))+1 \geq {\rm coind}(B_0(G))+1 \geq {\rm coind}(B(G))+2.
\end{equation}
\section{Proof of Main Results}\label{proofs}
We should mention that
the following proof is based on an idea similar to that used in an
interesting proof of Ziegler for Gale's lemma~(see page 67 in \cite{MR1988723}).

\noindent{\bf Proof of Lemma~\ref{galegen}.}
For simplicity of notation, assume that $V=\{v_1,\ldots,v_n\}$ where $\sigma(i)=v_i$.
Consider the following curve
$$\gamma=\{(1,t,t^2,\ldots t^{d})\in\mathbb{R}^{d+1}:\ t\in\mathbb{R}\}$$
and set $W=\{w_1,w_2,\ldots,w_n\}$, where $w_i=\gamma(i)$, for $i=1,2,\ldots,n$. Now, let $Z=\{z_1,z_2,\ldots,z_n\}\subseteq S^d$ be a set such that $z_i=(-1)^i{w_i\over ||w_i||}$,
for any $1\leq i\leq n$. Note that if $d\geq 1$, then $Z$ is a set.
Consider the identification  between $V$ and $Z$ such that $v_i\in V$ is identified with $z_i$, for any $1\leq i\leq n$.
It can be checked that every hyperplane of $\mathbb{R}^{d+1}$ passing trough the origin intersects
$\gamma$ in no more than $d$ points.
Moreover, if a hyperplane intersects the curve in exactly $d$ points, then the hyperplane cannot be tangent to the curve; and consequently, at each intersection point, the curve passes from one side of the hyperplane to the other side.

Now, we show that for any $y\in S^d$, $Z_{y}\in {\mathcal P}$.
On the contrary, suppose that there is a $y\in S^d$ such that $Z_{y}\not\in {\mathcal P}$.
Let $h$ be the hyperplane passing trough the origin which contains
the boundary of $H(y)$.
We can move this hyperplane continuously to a position such that it still contains the origin and
has exactly $d$ points of $W=\{w_1,w_2,\ldots,w_n\}$ while during this movement no points of $W$ crosses from one side of $h$ to the other side.
Consequently, during the aforementioned movement, no points of $Z=\{z_1,z_2,\ldots,z_n\}$
crosses from one side of $h$ to the other side.
Hence, at each of these intersections, $\gamma$ passes from one side of $h$ to the other side. 
Let $h^+$ and $h^-$ be two open half-spaces determined by the hyperplane $h$. 
Now consider $X=(x_1,x_2,\ldots,x_n)\in\{+,-,0\}^{n}\setminus\{\zero\}$ such that
$$
x_i=\left\{
\begin{array}{cl}
0 & {\rm if}\ w_i\ {\rm is\ on}\ h\\
+ & {\rm if}\  w_i\ {\rm is\ in }\ h^+\ {\rm and}\ i\ {\rm is\ even}\\
+ & {\rm if}\  w_i\ {\rm is\ in }\ h^-\ {\rm and}\ i\ {\rm is\ odd}\\
- & {\rm otherwise.}
\end{array}.\right.$$
Assume that $x_{i_1},x_{i_2},\ldots,x_{i_{n-d}}$ are nonzero entries of $X$, where $i_1<i_2<\cdots <i_{n-d}$.
It is easy to check that any two consecutive terms of  $x_{i_j}$'s have different signs.
Since $X$ has $n-m=\alt({\mathcal P},\sigma)+1$ nonzero entries, we have $\alt(X)=\alt(-X)=\alt({\mathcal P},\sigma)+1$;
and therefore, both $X_\sigma$ and $(-X)_\sigma$ are in ${\mathcal P}$. Also, one can see that 
either $X_\sigma\subseteq Z_y$ or $(-X)_\sigma\subseteq Z_y$. Therefore, since ${\mathcal P}$ is an signed-increasing property, we have $Z_y\in {\mathcal P}$ which is a contradiction. 
\hfill$\square$\\


\noindent{\bf Multichromatic Number of Stable Kneser Graphs.}
For  positive integers $n,k$, and $s$, the $s$-stable Kneser graph  $\KG(n,k)_s$
is an induced subgraph of $\KG(n,k)$ whose vertex set is  ${[n]\choose k}_s$. In other words, $\KG(n,k)_s=\KG(\left([n],{[n]\choose k}_s\right))$.  The chromatic number of stable Kneser graphs has been studied in several papers~\cite{MR2448565,jonsson,MR2793613}.  Meunier~\cite{MR2793613} posed a conjecture about the chromatic number of stable Kneser hypergraphs which is a generalization of a conjecture of Alon, Drewnowski, and 
{\L}uczak~\cite{MR2448565}.
In the case of graphs instead of hypergraphs, Meunier's conjecture asserts that the chromatic number of $\KG(n,k)_s$ is $n-s(k-1)$ for $n\geq sk$ and $s\geq 2$. Clearly, in view of Schrijver's result~\cite{MR512648}, this conjecture is true for $s=2$.  Moreover, 
for $s\geq 4$ and $n$ sufficiently large,  Jonsson~\cite{jonsson} gave an affirmative answer to the graph case of Meunier's conjecture. 

For two positive integers $m$ and  $n$ with $n\geq m$, 
an $m$-fold $n$-coloring of a graph $G$ is a homomorphism from $G$ to ${\rm KG}(n,m)$.
The $m^{th}$ multichromatic number of a graph $G$, $\chi_m(G)$, is defined as follows 
$$\displaystyle\chi_m(G)=\min\left\{n:\ G\longrightarrow {\rm KG}(n,m)\right\}.$$
Note that $\chi_1(G)=\chi(G)$.
In general, an $m$-fold $n$-coloring of a graph $G$ is called a multicoloring of $G$ with color set $[n]$. 
The following conjecture of Stahl~\cite{Stahl1976185} has received a considerable attention in the literature. 
\begin{alphconjecture}{\rm (\cite{Stahl1976185})}\label{stahlconj}
For positive integers $m, n$, and $k$ with $n\geq 2k$, we have
$$\chi_m({\rm KG}(n,k))=\lceil{m\over k}\rceil(n-2k)+2m.$$
\end{alphconjecture}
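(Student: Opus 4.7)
The plan is to split the asserted equality into two matching inequalities. The upper bound $\chi_m(\KG(n,k)) \leq \lceil m/k\rceil(n-2k)+2m$ is Stahl's own explicit homomorphism $\KG(n,k) \to \KG(\lceil m/k\rceil(n-2k)+2m, m)$, which I would just reproduce. The lower bound $\chi_m(\KG(n,k)) \geq \lceil m/k\rceil(n-2k)+2m$ is the main task, and I would obtain it through the stronger statement for Schrijver graphs, $\chi_m(\SG(n,k)) \geq \lceil m/k\rceil(n-2k)+2m$, combined with the inclusion $\SG(n,k)\subseteq \KG(n,k)$.

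For the Schrijver-graph inequality I would argue by contradiction. Assume a homomorphism $c:\SG(n,k)\to \KG(n',m)$ with $n' < \lceil m/k\rceil(n-2k)+2m$ exists. Apply Corollary~\ref{galegencor}(b) to $\widetilde{K_n^k}$; since $\salt(\widetilde{K_n^k},I)=2k-1$, there is an $n$-set $Z\subseteq S^{n-2k}$ identified with $[n]$ such that every open hemisphere $H(x)$ contains some vertex $v_x\in V(\SG(n,k))$. The multicoloring $c$ then attaches to every $x\in S^{n-2k}$ an $m$-subset $c(v_x)\subseteq [n']$, and after a canonical antipodal symmetrization of the choice of witness vertex the assignment becomes $\mathbb{Z}_2$-equivariant with respect to the antipodal action on $S^{n-2k}$ and the box-complex involution on $B(\KG(n',m))$.

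The crux is extracting the factor $\lceil m/k\rceil$. A naive single Borsuk--Ulam application only yields $n'\geq n-2k+2m$, which is tight only when $m\leq k$. To obtain the multiplicative factor I would iterate the Gale embedding by working on a $\lceil m/k\rceil$-fold join $S^{n-2k}*\cdots *S^{n-2k}$ (whose $\mathbb{Z}_2$-coindex is $\lceil m/k\rceil(n-2k)+\lceil m/k\rceil-1$) and at each of the $\lceil m/k\rceil$ levels record a distinct block of at most $k$ of the $m$ colors coming from the corresponding witness vertex. Concatenating the blocks produces a $\mathbb{Z}_2$-map into $S^{n'-2m}$ via the standard $\mathbb{Z}_2$-equivalence $B(\KG(n',m))\stackrel{\mathbb{Z}_2}{\longleftrightarrow}S^{n'-2m}$, and Borsuk--Ulam then contradicts $n' < \lceil m/k\rceil(n-2k)+2m$.

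The principal difficulty is making the per-level color blocks provably disjoint across levels, so that concatenation actually lands in $\binom{[n']}{m}$ rather than in a smaller multiset, and so that the dimension count lines up to $\lceil m/k\rceil(n-2k)+2m-1$ on the nose. This is where the $2$-stability of the witness vertices and the Kneser-disjointness edges of $\SG(n,k)$ have to interact correctly. The cleanest formulation is probably to apply Lemma~\ref{galegen} not to $\widetilde{K_n^k}$ itself but to a product-type signed-increasing property on $\lceil m/k\rceil$ disjoint copies of $[n]$ that encodes the whole $m$-fold coloring at once, and then to compute the alternation number of this property as $\lceil m/k\rceil(2k-1)$. Once this alternation count is established, the topological step reduces to a single invocation of Corollary~\ref{galegencor}(b) together with a routine Borsuk--Ulam comparison with $B(\KG(n',m))$.
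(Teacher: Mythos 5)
This statement is Stahl's conjecture, and the paper does not prove it --- it is quoted as Conjecture~\ref{stahlconj} precisely because it is open in general (the paper only notes that Stahl verified it for $k=2,3$). So there is no ``paper's own proof'' to compare against, and your proposal should be judged as an attempt at an open problem. As such it has a genuine gap, and the gap sits exactly at the known point of difficulty. Your upper bound is fine (it is Stahl's homomorphism), and a single application of Lemma~\ref{galegen} / Corollary~\ref{galegencor}(b) followed by Borsuk--Ulam does yield a lower bound --- but only $n'\geq n-2k+2m$, which is what the paper actually extracts (Theorem~B with $s=2$ gives $\chi_m(\KG(n,k)_2)=n-2k+2m$ for $k\geq m$). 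This matches the conjectured value only when $m\leq k$, i.e.\ when $\lceil m/k\rceil=1$. Everything you propose beyond that --- the $\lceil m/k\rceil$-fold join, the ``product-type signed-increasing property'' on $\lceil m/k\rceil$ copies of $[n]$ with alternation number $\lceil m/k\rceil(2k-1)$, and the claim that the per-level color blocks can be made disjoint so that concatenation lands in $\binom{[n']}{m}$ --- is unsubstantiated, and you acknowledge as much. That disjointness is not a technical loose end; it is the entire content of the conjecture for $m>k$, and no construction in the paper (or in the literature) delivers it.

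There is also a structural worry with your reduction to Schrijver graphs: for $m>k$ it is not known that $\chi_m(\SG(n,k))$ equals $\lceil m/k\rceil(n-2k)+2m$, so proving the Kneser lower bound ``through the stronger statement for Schrijver graphs'' replaces one open problem with a possibly harder one. (For $m\leq k$ the Schrijver route does work and is exactly the paper's Theorem~B at $s=2$, but there the factor $\lceil m/k\rceil$ is trivially $1$.) In short: your sketch correctly reproduces the known partial result $n-2k+2m$ via the paper's machinery, but it does not prove the statement, and the step you flag as ``the principal difficulty'' is precisely where the argument is missing rather than merely unpolished.
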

Stahl~\cite{Stahl1998287} proved the accuracy of this Conjecture for $k=2,3$ and arbitrary values of $m$. Chen~\cite{JGT21826} studied the multichromatic number of 
$s$-stable Kneser graphs and generalized Schrijver's result. 
In what follows, as an application of Lemma~\ref{galegen}, we present another proof of Chen's result. 
Note that, as a special case of Chen's result, we have the chromatic number of $s$-stable Kneser graphs provided that $s$ is even. 
\begin{alphtheorem}{\rm \cite{JGT21826}}
For positive integers $n, k,$ and $s$ with $n\geq sk$, if $s$ is an even integer and  $k\geq m$, then
$\chi_m({\rm KG}(n,k)_s)=n-sk+sm$.
\end{alphtheorem}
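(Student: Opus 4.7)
The strategy splits into an explicit upper bound and a Borsuk--Ulam-style lower bound via Lemma~\ref{galegen}.

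For the upper bound, I would define $\phi\colon {[n]\choose k}_s \to {[n-sk+sm]\choose m}$ by $\phi(\{a_1<a_2<\cdots<a_k\}) := \{a_1,a_2,\ldots,a_m\}$. The $s$-stability $a_{i+1}-a_i \ge s$ gives $a_m \le a_k - s(k-m) \le n - s(k-m) = n-sk+sm$, so the image is a valid $m$-subset of $[n-sk+sm]$. Since $\phi(A)\subseteq A$, disjointness of $A,B$ descends to disjointness of $\phi(A),\phi(B)$, so $\phi$ is a graph homomorphism $\KG(n,k)_s \to \KG(n-sk+sm,m)$. Hence $\chi_m(\KG(n,k)_s) \le n-sk+sm$.

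For the lower bound, I would argue by contradiction: assume $c\colon {[n]\choose k}_s \to {[N]\choose m}$ is an $m$-fold coloring with $N = n-sk+sm-1$. The plan is to apply Lemma~\ref{galegen} with a signed-increasing property $\mathcal{P} \subseteq P_s([n])$ and a bijection $\sigma\colon[n]\to[n]$ tailored to the parity of $s$, chosen so that $\alt(\mathcal{P},\sigma) = s(k-m)$. Setting $d = n - \alt(\mathcal{P},\sigma) - 1 = N$, Lemma~\ref{galegen} produces a configuration $Z \subset S^N$ identified with $[n]$ whose open hemispheres carry the structure prescribed by $\mathcal{P}$ --- concretely, each $H(x)$ contains $m$ pairwise disjoint $s$-stable $k$-subsets. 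Using $c$, I would then construct a continuous $\mathbb{Z}_2$-equivariant map $f\colon S^N \to S^{N-1}$: for each $x$, the colorings of $m$ witnessing subsets in $H(x)$ form $m$ pairwise disjoint $m$-subsets of $[N]$, from which one extracts a signed color-indicator vector antipodally odd with respect to the $H(x)/H(-x)$ opposition, with a barycentric averaging step ensuring continuity. Existence of such an $f$ contradicts Borsuk--Ulam, forcing $N \ge n-sk+sm$.

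The hard part will be designing $\mathcal{P}$ and $\sigma$ so that $\alt(\mathcal{P},\sigma) = s(k-m)$. The naive choice ``both sides contain $m$ pairwise disjoint $s$-stable $k$-subsets'' is far too weak --- its alternation number is essentially $n$ in simple examples --- so a much more rigid combinatorial structure must be baked into $\mathcal{P}$. The $s$-even hypothesis should enter precisely here, plausibly via the $\mathbb{Z}_2$-pairing $i\leftrightarrow i + s/2 \pmod n$ (which preserves $s$-stability exactly when $s$ is even and sends each $s$-stable $k$-subset to a disjoint one); this should enrich $\mathcal{P}$ into a truly restrictive constraint tight enough to reach $\alt = s(k-m)$. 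The continuity of the equivariant map $f$ is a secondary technical step that follows patterns familiar from the chromatic-number literature once the Gale configuration is fixed.
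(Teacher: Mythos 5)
Your upper bound is correct and complete: the map sending an $s$-stable $k$-set to its $m$ smallest elements is exactly the homomorphism $\KG(n,k)_s\to\KG(n-sk+sm,m)$ that gives $\chi_m(\KG(n,k)_s)\le n-sk+sm$ (the paper simply cites Chen for this). The lower bound, however, has a gap that is not merely technical: the plan of choosing ${\mathcal P}$ and $\sigma$ with $\alt({\mathcal P},\sigma)=s(k-m)$, so that the Gale configuration lives on $S^{N}$ with $N=n-sk+sm-1$ and every open hemisphere contains $m$ pairwise disjoint $s$-stable $k$-subsets, is unachievable. Any signed-increasing ${\mathcal P}$ whose members $(A,B)$ all have the property that $A$ and $B$ each contain $m$ pairwise disjoint $k$-subsets must exclude every $(A,B)$ with $|B|<mk$; taking $X$ alternating $+,-,+,\dots$ on positions $\sigma^{-1}$ of any $2mk-1$ chosen vertices and $0$ elsewhere gives $\alt(X)=2mk-1$ with $|X^-|=mk-1$, so $X_\sigma\notin{\mathcal P}$ and hence $\alt({\mathcal P},\sigma)\ge 2mk-1$ for \emph{every} bijection $\sigma$. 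Since the theorem allows $m=k$ (where your target is $\alt=0$) and even for $m=1$, $s=2$ one has $2mk-1=2k-1>2(k-1)$, no choice of ${\mathcal P}$ and $\sigma$, however clever, can reach $\alt({\mathcal P},\sigma)=s(k-m)$. The $m$-dependence simply cannot be pushed into the dimension of the sphere.

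The paper's proof keeps the sphere dimension independent of $m$ and puts $m$ into the counting instead. It takes ${\mathcal P}=\{(A,B): A$ and $B$ each contain at least $s/2$ pairwise disjoint $s$-stable $k$-subsets$\}$, for which $\alt({\mathcal P},I)=sk-1$ (this is where evenness of $s$ enters: an alternating string of length $sk$ puts $sk/2$ elements on each side, and these decompose into $s/2$ interleaved $s$-stable $k$-sets; your proposed pairing $i\mapsto i+s/2$ plays no role). Lemma~\ref{galegen} then gives $Z\subset S^{d}$ with $d=n-sk$ such that every open hemisphere contains $s/2$ pairwise disjoint vertices. Given an $m$-fold coloring with color set $[C]$, one covers $S^{d}$ by the open sets $A_i=\{x: H(x)$ contains a vertex colored $i\}$ for $i\le C-sm+1$ together with the closed complement $A_{C-sm+2}$; no $A_i$ with $i\le C-sm+1$ contains antipodal points (two disjoint vertices sharing color $i$), and $A_{C-sm+2}$ cannot either, since $x,-x\in A_{C-sm+2}$ would yield $s$ pairwise disjoint vertices whose $sm$ required colors must all come from a palette of only $sm-1$ colors. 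The Borsuk--Ulam theorem for open/closed covers then forces $C-sm+2\ge d+2$, i.e.\ $C\ge n-sk+sm$. So the correct repair of your sketch is not a more rigid ${\mathcal P}$, but this covering argument on $S^{n-sk}$; your proposed equivariant map $S^N\to S^{N-1}$ is also left unconstructed, but it becomes moot once the dimension count is fixed.
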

\begin{proof}{
It is straightforward to check that
$\chi_m({\rm KG}(n,k)_s)\leq n-sk+sm$.
For a proof of this observation, we refer the reader to~\cite{JGT21826}.
Therefore, it is enough to show $\chi_m({\rm KG}(n,k)_s)\geq n-sk+sm$.
For the set $[n]$,
let ${\mathcal P}={\mathcal P}(n,k,s)\subseteq P_s([n])$ be a signed-increasing property such that
$(A,B)\in {\mathcal P}$ if each of $A$ and $B$ contains at least ${s\over 2}$ pairwise disjoint
$s$-stable $k$-subsets of $[n]$. One can see that $\alt({\mathcal P},I)=sk-1$ where $I:[n]\longrightarrow [n]$ is the identity bijection. Thus, by Lemma~\ref{galegen}, for $d=n-sk$,  there exists a multiset $Z\subset S^{d}$ of size $n$ such that under a suitable identification of $Z$ with $V$, for any $x\in S^{d}$,  $Z_x\in{\mathcal P}$. In other words, for any $x\in S^{d}$, $H(x)$ contains at least ${s\over 2}$ pairwise disjoint vertices of ${\rm KG}(n,k)_s$.
Now let $c:V({\rm KG}(n,k)_s)\longrightarrow {C\choose m}$ be an $m$-fold  $C$-coloring of ${\rm KG}(n,k)_s$. 
For each $i\in \{1,2,\ldots,C-sm+1\}$, define
$A_i$ to be a set consisting of all $x\in S^d$ such that $H(x)$ contains some vertex with color $i$.

Furthermore, define $A_{C-sk+2}=S^d\setminus \displaystyle\cup_{i=1}^{C-sm+1}A_i$.
One can check that each $A_i$ contains no pair of
antipodal points, i.e., $A_i\cap (-A_i)=\varnothing$; and also, for $i\in\{1,2,\ldots, C-sm+1\}$,
$A_i$ is an open subset of $S^d$ and $A_{C-sm+2}$
is closed. Now by the Borsuk-Ulam theorem, i.e., for any covering of $S^{d}$ by $d+1$ sets $B_1,\ldots, B_{d+1}$, each $B_i$ open or closed, there exists an 
$i$ such that $B_i$ contains a pair of antipodal points. Accordingly, we have $C-sm+2\geq d+2=n-sk+2$ which completes the proof.
}\end{proof}


In what follows, in view of Lemma~\ref{galegen} and with a similar approach  as in proof of Proposition~8 of~\cite{MR2279672}, we prove Theorem~\ref{coind}.

\noindent{\bf Proof of Theorem~\ref{coind}.}
Let $\sigma:[n]\longrightarrow V(\mathcal{H})$ be an arbitrary  bijection.
To prove the first part, set
$d=|V|-\alt({\mathcal H},\sigma)-1$. In view of inequalities of (\ref{lbchrom}) and the definition of $\alt({\mathcal H})$, it is sufficient to prove that
${\rm coind}(B_0(G))+1\geq |V|-\alt({\mathcal H},\sigma)$. If $d\leq 0$, then one can see that the assertion follows.
Hence, suppose $d\geq 1$. Now in view of Corollary~\ref{galegencor}, there exists an $n$-set $Z\subset S^d$ and an identification of $Z$ with $V$ such that for any $x\in S^{d}$, at least one of open hemispheres $H(x)$ and $H(-x)$ contains some edge of ${\mathcal H}$.

For any vertex $A$ of ${\rm KG}({\mathcal H})$ and any $x\in S^d$, define $D_A(x)$ to be
the smallest distance of a point in $A\subset S^d$ from the set $S^d\setminus H(x)$.
Note that $D_A(x)>0$ if and only if $H(x)$ contains $A$.
Define
$$D(x)=\displaystyle\sum_{A\in E}\left(D_A(x)+D_A(-x)\right).$$
Since, 
for any $x\in S^d$, at least one of $H(x)$ and $H(-x)$ contains some edge of ${\mathcal H}$, 
we have $D(x)>0$.
Thus, the map
$$f(x)= {1\over D(x)}\left(\sum_{A\in E}D_A(x)||(A,1)||+\sum_{A\in E}D_A(-x)||(A,2)||\right)$$
is a $\mathbb{Z}_2$-map from $S^d$ to $||B_0({\rm KG}({\mathcal H}))||$. It implies ${\rm coind}(B_0(G))\geq d$.\\

\noindent {\rm b)} To prove the second part,  set $d=n-\salt({\mathcal H},\sigma)-1$. In view of inequalities of (\ref{lbchrom}) and the definition of $\salt({\mathcal H})$, 
it is sufficient to prove that
${\rm coind}(B(G))+2\geq |V|-\salt({\mathcal H},\sigma)+1$. If $d\leq 0$, then one can see that the assertion follows.
Hence, suppose $d\geq 1$. Now
in view of Corollary~\ref{galegencor}, there is an $n$-set $Z\subset S^d$ and an identification of $Z$ with $V$ such that for any $x\in S^{d}$,
$H(x)$ contains some edge of ${\mathcal H}$. Define
$D(x)=\displaystyle\sum_{A\in E}D_A(x).$
For any $x\in S^d$, $H(x)$ has some edge of ${\mathcal H}$ which implies that $D(x)>0$.
Thus, the map
$$f(x)= {1\over 2D(x)}\sum_{A\in E}D_A(x)||(A,1)||+{1\over 2D(-x)}\sum_{A\in E}D_A(-x)||(A,2)||$$ 
is a $\mathbb{Z}_2$-map from $S^d$ to $||B({\rm KG}({\mathcal H}))||$. This implies ${\rm coind}(B(G))\geq d$.
\hfill$\square$\\

\noindent{\bf Acknowledgement:}
The authors would like to express their deepest gratitude to Professor Carsten~Thomassen for his insightful comments.
They also appreciate the detailed valuable comments of  Dr.~Saeed~Shaebani.
The research of Hossein Hajiabolhassan is partially supported by ERC advanced grant GRACOL. A part of this paper was written while Hossein Hajiabolhassan was visiting School of Mathematics, Institute for Research in Fundamental Sciences~(IPM). He acknowledges the support of IPM.
Moreover, they would like to thank Skype for sponsoring their endless conversations in two countries.


\def\cprime{$'$} \def\cprime{$'$}

\end{document}